\newcommand\shorttitle{Intertwinings for general $\beta$-Laguerre and $\beta$-Jacobi processes}
\newcommand\authors{T. Assiotis}
\ifodd\value{page}
\authors
\shorttitle
\newtheorem{thm}{Theorem}[section]
\newtheorem{cor}[thm]{Corollary}
\newtheorem{rmk}[thm]{Remark}
\title{\large \bf INTERTWININGS FOR GENERAL $\beta$-LAGUERRE AND $\beta$-JACOBI PROCESSES}
\author{\small THEODOROS ASSIOTIS}
\date{}
\begin{document}

\maketitle

\begin{abstract}
We show that, for $\beta \ge 1$, the semigroups of $\beta$-Laguerre and $\beta$-Jacobi processes of different dimensions are intertwined in analogy to a similar result for $\beta$-Dyson Brownian motion recently obtained in \cite{RamananShkolnikov}. These intertwining relations generalize to arbitrary $\beta \ge 1$ the ones obtained for $\beta=2$ in \cite{InterlacingDiffusions} between $h$-transformed Karlin-McGregor semigroups. Moreover, they form the key step towards constructing a multilevel process in a Gelfand-Tsetlin pattern leaving certain Gibbs measures invariant. Finally, as a by product, we obtain a relation between general $\beta$-Jacobi ensembles of different dimensions.
\end{abstract}

\section{Introduction}
The aim of this short note is to establish intertwining relations between the semigroups of general $\beta$-Laguerre and $\beta$-Jacobi processes, in analogy to the ones obtained for general $\beta$-Dyson Brownian motion in \cite{RamananShkolnikov} (see also \cite{GorinShkolnikov}). These, also generalize the relations obtained for $\beta=2$  in \cite{InterlacingDiffusions} when the transition kernels for these semigroups are given explicitly in terms of $h$-transforms of Karlin-McGregor determinants.

We begin, by introducing the stochastic processes we will be dealing with. Consider the unique strong solution to the following system of $SDEs$ with $i=1,\cdots,n$ with values in $[0,\infty)^n$,
\begin{align}\label{BESQsde}
dX_i^{(n)}(t)=2\sqrt{X_i^{(n)}(t)}dB_i^{(n)}(t)+\beta\left(\frac{d}{2}+\sum_{1\le j \le k, j \ne i}^{}\frac{2X_i^{(n)}(t)}{X_i^{(n)}(t)-X_j^{(n)}(t)}\right)dt,
\end{align}
where the $B_i^{(n)}$, $i=1,\cdots, n,$ are independent standard Brownian motions. This process, was introduced and studied by Demni in \cite{DemniBESQ} in relation to Dunkl processes, (see for example \cite{DunklProcesses}) where it is referred to as the $\beta$-Laguerre process, since its distribution at time $1$, if started from the origin, is given by the $\beta$-Laguerre ensemble (see Section 5 of \cite{DemniBESQ}). We could, equally well, have called this the $\beta$-squared Bessel process, since for $\beta=2$ it exactly consists of $n$ $BESQ(d)$ diffusion processes conditioned to never collide as first proven in \cite{O Connell} but we stick to the terminology of \cite{DemniBESQ}. Similarly, consider the unique strong solution to the following system of $SDEs$ in $[0,1]^n$,
\begin{align}\label{Jacobisde}
dX_i^{(n)}(t)=2\sqrt{X_i^{(n)}(t)(1-X_i^{(n)}(t))}dB_i^{(n)}(t)+\beta\left(a-(a+b)X_i^{(n)}(t)+\sum_{1\le j \le k, j \ne i}^{}\frac{2X_i^{(n)}(t)(1-X_i^{(n)}(t))}{X_i^{(n)}(t)-X_j^{(n)}(t)}\right)dt,
\end{align}
where, again, the $B_i^{(n)}$, $i=1,\cdots, n,$ are independent standard Brownian motions. We call this solution the $\beta$-Jacobi process. It was first introduced and studied in \cite{DemniJacobi} as a generalization of the eigenvalue evolutions of matrix Jacobi processes and whose stationary distribution is given by the $\beta$-Jacobi ensemble (see Section 4 of \cite{DemniJacobi}):
\begin{align}
\mathcal{M}^{Jac,n}_{a,b,\beta}(dx)=C_{n,a,b,\beta}^{-1}\prod_{i=1}^{n}x^{\frac{\beta}{2}a-1}_{i}(1-x_i)^{\frac{\beta}{2}b-1}\prod_{1\le i < j \le n}^{}|x_j-x_i|^{\beta}dx,
\end{align}
for some normalization constant $C_{n,a,b,\beta}$.

We now give sufficient conditions that guarantee the well-posedness of  the $SDEs$ above. For $\beta \ge 1$ and $d\ge 0$ and $a,b\ge 0$, (\ref{BESQsde}) and (\ref{Jacobisde}) have a unique strong solution with no collisions and no explosions and with instant diffraction if started from a degenerate (i.e. when some of the coordinates coincide) point (see Corollary 6.5 and 6.7 respectively of \cite{Graczyk}). In particular, the coordinates of $X^{(n)}$ stay ordered. Thus if,
\begin{align*}
X^{(n)}_1(0) \le \cdots \le X^{(n)}_n(0),
\end{align*}
then with probability one,
\begin{align*}
X^{(n)}_1(t) < \cdots < X^{(n)}_n(t), \ \forall  \ t>0.
\end{align*}
From now on, we restrict to those parameter values.

It will be convenient to define $\theta=\frac{\beta}{2}$. We write $P^{(n)}_{d,\theta}(t)$ for the Markov semigroup associated to the solution of (\ref{BESQsde}). Similarly, write $Q^{(n)}_{a,b,\theta}(t)$ for the Markov semigroup associated to the solution of (\ref{Jacobisde}).
Furthermore, denote by $\mathcal{L}^{(n)}_{d,\theta}$ and $\mathcal{A}^{(n)}_{a,b,\theta}$ the formal infinitesimal generators for (\ref{BESQsde}) and (\ref{Jacobisde}) respectively, given by,
\begin{align}
\mathcal{L}^{(n)}_{d,\theta}&=\sum_{i=1}^{n}2z_i\frac{\partial}{\partial z^2_i}+2 \theta \sum_{i=1}^{n}\left(\frac{d}{2}+\sum_{1\le j \le k, j \ne i}^{}\frac{2z_i}{z_i-z_j}\right)\frac{\partial}{\partial z_i},\\
\mathcal{A}^{(n)}_{a,b,\theta}&=\sum_{i=1}^{n}2z_i(1-z_i)\frac{\partial}{\partial z^2_i}+2 \theta \sum_{i=1}^{n}\left(a-(a+b)z_i+\sum_{1\le j \le k, j \ne i}^{}\frac{2z_i(1-z_i)}{z_i-z_j}\right)\frac{\partial}{\partial z_i}.
\end{align}
With $I$ denoting either $[0,\infty)$ or $[0,1]$, define the chamber,
\begin{align*}
W^n(I)=\{x=(x_1,\cdots,x_n)\in I^n:x_1\le \cdots \le x_n\}.
\end{align*}
Moreover, for $x\in W^{n+1}$ define the set of $y \in W^{n}$ that \textit{interlace} with $x$ by,
\begin{align*}
W^{n,n+1}(x)=\{y=(y_1,\cdots,y_n)\in I^n: x_1\le y_1 \le x_2 \le \cdots \le y_n \le x_{n+1}\}.
\end{align*}
For $x\in W^{n+1}$ and $y\in W^{n,n+1}(x)$, define the \textit{Dixon-Anderson} conditional \textit{probability} density on $W^{n,n+1}(x)$ (originally introduced by Dixon at the beginning of the last century in \cite{Dixon} and independently rediscovered by Anderson in his study of the Selberg integral in \cite{Anderson}) by,
\begin{align}
\lambda^{\theta}_{n,n+1}(x,y)=\frac{\Gamma (\theta (n+1))}{\Gamma(\theta)^{n+1}}\prod_{1\le i <j \le n+1}^{}(x_j-x_i)^{1-2\theta}\prod_{1\le i <j \le n}^{}(y_j-y_i)\prod_{i=1}^{n}\prod_{j=1}^{n+1}|y_i-x_j|^{\theta-1}.
\end{align}
Denote by $\Lambda^{\theta}_{n,n+1}$, the integral operator with kernel $\lambda^{\theta}_{n,n+1}$ i.e.,
\begin{align*}
(\Lambda^{\theta}_{n,n+1}f)(x)=\int_{y\in W^{n,n+1}(x)}^{}\lambda^{\theta}_{n,n+1}(x,y)f(y)dy.
\end{align*}
Then, our goal is to prove the following theorem, which should be considered as a generalization to the other two classical $\beta$-ensembles, the $Laguerre$ and $Jacobi$, of the result of \cite{RamananShkolnikov} for the $Gaussian$ ensemble.
\begin{thm}\label{MainTheorem}
Let $\beta \ge 1$, $d\ge 2$ and $a,b \ge 1$. Then, with $\theta=\frac{\beta}{2}$, we have the following equalities of Markov kernels, $\forall t \ge 0$,
\begin{align}
P^{(n+1)}_{d-2,\theta}(t)\Lambda^{\theta}_{n,n+1}&=\Lambda^{\theta}_{n,n+1}P^{(n)}_{d,\theta}(t)\label{BESQintertwining},\\
Q^{(n+1)}_{a-1,b-1,\theta}(t)\Lambda^{\theta}_{n,n+1}&=\Lambda^{\theta}_{n,n+1}Q^{(n)}_{a,b,\theta}(t) \label{Jacobiintertwining}.
\end{align}
\end{thm}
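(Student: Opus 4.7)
The plan is to establish the intertwining first at the level of infinitesimal generators and then lift it to the semigroups. With $\mathcal{L}^x$ denoting the generator as a differential operator in the variable $x$, the heart of the argument would be the kernel identity
\begin{equation*}
\mathcal{L}^{(n+1),x}_{d-2,\theta}\,\lambda^{\theta}_{n,n+1}(x,y) = \bigl(\mathcal{L}^{(n)}_{d,\theta}\bigr)^{*,y}\,\lambda^{\theta}_{n,n+1}(x,y),
\end{equation*}
together with its Jacobi analogue for $\mathcal{A}$, where $*$ denotes the formal $L^2$-adjoint with respect to Lebesgue measure on $W^{n,n+1}(x)$. The parameter shifts $d \mapsto d-2$ (resp.\ $a \mapsto a-1$ and $b \mapsto b-1$) are forced by the factors $(x_j - x_i)^{1-2\theta}$ and $|y_i - x_j|^{\theta - 1}$ present in $\lambda^{\theta}_{n,n+1}$: differentiating these under the second-order part of the $(n+1)$-dimensional generator produces precisely the extra drift that is absorbed into the shifted parameters, while the first-order drift in $\mathcal{L}^{(n)}_{d,\theta}$ acting in $y$ is engineered to reproduce the remaining terms.

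Granted the kernel identity, I would integrate it against a test function $f$ on $W^{n,n+1}(x)$ and integrate by parts in $y$ to obtain
\begin{equation*}
\mathcal{L}^{(n+1)}_{d-2,\theta}\bigl(\Lambda^{\theta}_{n,n+1}f\bigr)(x) = \bigl(\Lambda^{\theta}_{n,n+1}\,\mathcal{L}^{(n)}_{d,\theta}f\bigr)(x),
\end{equation*}
for $f$ in a suitable core of smooth functions. The boundary terms from this integration by parts must be shown to vanish, both on the interlacing faces $\{y_i = x_i\}$ and $\{y_i = x_{i+1}\}$ inside $I^n$ and at the endpoints of $I$. Along the interlacing faces the power $|y_i - x_j|^{\theta - 1}$ with $\theta \ge 1/2$ combines with the vanishing of the diffusion coefficient at the chamber boundary to kill the surface terms; at the endpoints of $I$, the hypotheses $d \ge 2$ and $a,b \ge 1$ guarantee that the shifted processes still do not reach $\partial I$ (by the non-attainability part of the results of \cite{Graczyk}), so the corresponding boundary contributions are absent.

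To pass from the generator identity to the claimed semigroup identity, I would run the standard semigroup argument: for $f$ in the chosen core, both curves $t \mapsto P^{(n+1)}_{d-2,\theta}(t)\Lambda^{\theta}_{n,n+1}f$ and $t \mapsto \Lambda^{\theta}_{n,n+1}P^{(n)}_{d,\theta}(t)f$ are continuous, agree at $t=0$, and are differentiable in $t$ with derivatives that match thanks to the generator identity; hence they solve the same Cauchy problem for $\mathcal{L}^{(n+1)}_{d-2,\theta}$ with common initial value $\Lambda^{\theta}_{n,n+1}f$, and uniqueness of solutions to the associated martingale problem (again available through \cite{Graczyk}) forces them to coincide for all $t\ge 0$. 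Density of the core in an appropriate function space on $W^n(I)$ then extends the identity to all bounded continuous $f$, yielding the equality of Markov kernels claimed in \eqref{BESQintertwining} and \eqref{Jacobiintertwining}.

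The main obstacle I expect is the direct verification of the kernel identity itself. Differentiating $\lambda^{\theta}_{n,n+1}$ produces a thicket of rational terms, and the matching of the two sides rests on partial-fraction and telescoping identities for sums of the form $\sum_{k}(x_i - y_k)^{-1}$ reorganising into $\sum_{j\ne i}(x_i - x_j)^{-1}$ plus controlled remainders, together with a careful accounting of how the Laguerre factor $x_i$ and the Jacobi factor $x_i(1-x_i)$ that multiply the repulsion kernel in \eqref{BESQsde} and \eqref{Jacobisde} interact with the $|y_i - x_j|^{\theta-1}$ terms. The analogous calculation of \cite{RamananShkolnikov} for $\beta$-Dyson Brownian motion provides the organisational template, and my expectation is that the Laguerre case reduces to the Dyson case after a change of variables $x = z^2$-style manipulation at the level of the kernel, while the Jacobi case requires the additional bookkeeping of the boundary weight $(1-z)^{b-1}$ but proceeds along the same lines.
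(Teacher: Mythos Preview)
Your plan differs fundamentally from the paper's proof, and it also contains a genuine gap.

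The paper never attempts a pointwise identity $\mathcal{L}^{(n+1),x}_{d-2,\theta}\lambda^\theta_{n,n+1}=(\mathcal{L}^{(n)}_{d,\theta})^{*,y}\lambda^\theta_{n,n+1}$. Instead it works on the Jack-polynomial basis: one writes $\mathcal{L}^{(n)}_{d,\theta}=2\mathcal{B}_2^{(n),\theta}+\theta d\,\mathcal{B}_1^{(n)}$ and $\mathcal{A}^{(n)}_{a,b,\theta}=2\mathcal{B}_2^{(n),\theta}-2\mathcal{D}^{(n),\theta}+2\theta a\,\mathcal{B}_1^{(n)}-2\theta(a+b)\mathcal{B}_3^{(n)}$, uses the known actions of $\mathcal{B}_1,\mathcal{B}_2,\mathcal{B}_3,\mathcal{D}$ on $J_\lambda$, and exploits the Okounkov--Olshanski relation $\Lambda^\theta_{n,n+1}J_\lambda=c(\lambda,n,\theta)J_\lambda$. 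The generator intertwining on each $J_\lambda$ then reduces to matching coefficients of $J_\lambda$ and $J_{\lambda_{(i)}}$, which boils down to two short Gamma-function identities. The passage to semigroups is by a matrix-exponential argument on the finite-dimensional span $\{J_\kappa:\kappa\le\lambda\}$, and the extension from Jack polynomials to full Markov kernels uses the moment method, powered by an exponential moment estimate (in the Laguerre case one observes that $\|X^{(n)}(t)\|_1$ is itself a squared Bessel process). This is precisely the strategy of \cite{RamananShkolnikov}; your citation of that paper as an ``organisational template'' for a direct kernel computation is misplaced, since \cite{RamananShkolnikov} also works via Jack polynomials and never differentiates the Dixon--Anderson density.

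The concrete gap in your outline is the treatment of boundary terms in the integration by parts. The diffusion coefficients $2y_i$ and $2y_i(1-y_i)$ of $\mathcal{L}^{(n)}_{d,\theta}$ and $\mathcal{A}^{(n)}_{a,b,\theta}$ vanish only at the endpoints of $I$, \emph{not} on the interlacing faces $\{y_i=x_i\}$ and $\{y_i=x_{i+1}\}$, so the claim that ``the vanishing of the diffusion coefficient at the chamber boundary'' kills the surface terms is simply false. For $\tfrac12\le\theta<1$ the density $\lambda^\theta_{n,n+1}(x,y)$ blows up like $|y_i-x_j|^{\theta-1}$ on those faces and its $y$-derivative like $|y_i-x_j|^{\theta-2}$, so the surface contributions are genuinely singular and the kernel identity would have to be interpreted distributionally before any integration by parts is legitimate; you have not addressed this. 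The paper's Jack-polynomial route sidesteps the issue entirely by never differentiating $\lambda^\theta_{n,n+1}$. Finally, the suggested shortcut of reducing the Laguerre kernel identity to the Dyson one via ``$x=z^2$'' is unlikely to work: the Dixon--Anderson density mixes $x$- and $y$-variables through the cross factors $\prod_{i,j}|y_i-x_j|^{\theta-1}$ and does not factor through such a substitution in any evident way.
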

\begin{rmk}
For $\beta=2$, this result was already obtained in \cite{InterlacingDiffusions}, see in particular subsections 3.7 and 3.8 therein respectively.
\end{rmk} 

\begin{rmk}
The general theory of intertwining diffusions (see \cite{PalShkolnikov}), suggests that there should be a way to realize these intertwining relations by coupling these $n$ and $n+1$ particle processes, so that they interlace. In the Laguerre case, (the Jacobi case is analogous) the resulting process $Z=(X,Y)$, with $Y$ evolving according to $P^{(n)}_{d,\theta}(t)$ and $X$ in its own filtration according to $P^{(n+1)}_{d-2,\theta}(t)$, should (conjecturally) have generator given by,
\begin{align*}
\mathcal{L}^{n,n+1}_{\beta,d}=\sum_{j=1}^{n}2y_j\partial^2_{y_j}+\beta\sum_{j=1}^{n}\left(\frac{d}{2}+\sum_{i\ne j}^{}\frac{2y_j}{y_j-y_i}\right)\partial_{y_j}+\sum_{j=1}^{n+1}2x_j\partial^2_{x_j}+\beta\sum_{j=1}^{n+1}\left(\frac{d-2}{2}+\sum_{i\ne j}^{}\frac{2x_j}{x_j-x_i}\right)\partial_{x_j}\\
+(1-\beta)\sum_{j=1}^{n+1}\sum_{i \ne j}^{}\frac{4x_j}{x_i-x_j}\partial_{x_j}+\left(\frac{\beta}{2}-1\right)\sum_{j=1}^{n+1}\sum_{i=1}^{n}\frac{4x_j}{x_j-y_i}\partial_{x_j},
\end{align*}
with reflecting boundary conditions of the $X$ components on the $Y$ particles (in case they do collide). For a rigorous construction of the analogous coupled process in the case of Dyson Brownian motions with $\beta>2$, see Section 4 of \cite{GorinShkolnikov}. In fact, for certain values of the parameters, the construction of the process with the generator above, can be reduced to the results of $\cite{GorinShkolnikov}$ and a more detailed account will appear as part of the author's $PhD$ thesis \cite{PhDThesis}.

As just mentioned, such a coupling was constructed for Dyson Brownian motion with $\beta > 2$ in \cite{GorinShkolnikov}; and in \cite{InterlacingDiffusions} (see also \cite{Sun}) for copies of general one-dimensional diffusion processes, that in particular includes the squared Bessel (this corresponds to the $Laguerre$ process of this note) and Jacobi cases for $\beta=2$, when the interaction, between the two levels, entirely consists of local hard reflection and the transition kernels are explicit. Given such 2-level couplings, one can then iterate to construct a multilevel process in a Gelfand-Tsetlin pattern, as in \cite{Warren} which initiated this program (see also \cite{GorinShkolnikov},\cite{PalShkolnikov},\cite{InterlacingDiffusions}). For a different type of coupling, for $\beta=2$ Dyson Brownian motion, that preceded \cite{O Connell} and is related to the Robinson-Schensted correspondence, see \cite{O ConnellTams}, \cite{O ConnellYor} and the related work \cite{BougerolJeulin}.
\end{rmk}

Using Theorem \ref{MainTheorem} and that $\mathcal{M}^{Jac,n}_{a,b,\beta}$ is the \textit{unique} stationary measure of (\ref{Jacobisde}) which follows from smoothness and positivity of the transition density $p^{n,\beta,a,b}_t(x,y)$, with respect to Lebesgue measure of $Q^{(n)}_{a,b,\theta}(t)$ (see Proposition 4.1 of \cite{DemniJacobi}; for this to apply we further need to restrict to $a,b > \frac{1}{\beta}$) and the fact that two distinct ergodic measures must be mutually singular (see \cite{Walters}), we immediately get:
\begin{cor}
For $\beta\ge 1$ and $a,b > 1$ and with $\theta=\frac{\beta}{2}$,
\begin{align}
\mathcal{M}^{Jac,n+1}_{a-1,b-1,\beta}\Lambda^{\theta}_{n,n+1}=\mathcal{M}^{Jac,n}_{a,b,\beta}.
\end{align}
\end{cor}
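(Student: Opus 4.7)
The plan is to verify that $\mu := \mathcal{M}^{Jac,n+1}_{a-1,b-1,\beta}\Lambda^{\theta}_{n,n+1}$ is an invariant probability measure for the semigroup $Q^{(n)}_{a,b,\theta}(t)$, and then appeal to uniqueness to identify $\mu$ with $\mathcal{M}^{Jac,n}_{a,b,\beta}$.

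First, since $\Lambda^{\theta}_{n,n+1}$ is a Markov kernel and $\mathcal{M}^{Jac,n+1}_{a-1,b-1,\beta}$ a probability measure, $\mu$ is itself a well-defined probability measure on $W^{n}([0,1])$. For any $t\ge 0$, applying the Jacobi intertwining (\ref{Jacobiintertwining}) of Theorem \ref{MainTheorem}---whose hypotheses $\beta\ge 1$ and $a,b\ge 1$ are satisfied here---followed by the $Q^{(n+1)}_{a-1,b-1,\theta}(t)$-stationarity of $\mathcal{M}^{Jac,n+1}_{a-1,b-1,\beta}$ yields
\begin{align*}
\mu Q^{(n)}_{a,b,\theta}(t) &= \mathcal{M}^{Jac,n+1}_{a-1,b-1,\beta}\Lambda^{\theta}_{n,n+1}Q^{(n)}_{a,b,\theta}(t)\\
&= \mathcal{M}^{Jac,n+1}_{a-1,b-1,\beta}Q^{(n+1)}_{a-1,b-1,\theta}(t)\Lambda^{\theta}_{n,n+1}\\
&= \mathcal{M}^{Jac,n+1}_{a-1,b-1,\beta}\Lambda^{\theta}_{n,n+1} = \mu,
\end{align*}
so $\mu$ is stationary for $Q^{(n)}_{a,b,\theta}(t)$.

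Next I would invoke uniqueness of the stationary probability measure. Since $a,b>1\ge 1/\beta$, Proposition 4.1 of \cite{DemniJacobi} furnishes a smooth, strictly positive transition density $p^{n,\beta,a,b}_t(x,y)$ for $Q^{(n)}_{a,b,\theta}(t)$ on the interior of $W^{n}([0,1])$. A standard argument then gives uniqueness: any invariant probability measure is absolutely continuous with respect to Lebesgue measure with density strictly positive on open sets, so two such measures cannot be mutually singular; on the other hand, distinct ergodic invariant measures must be mutually singular (see \cite{Walters}). By ergodic decomposition, there is at most one invariant probability measure, which must coincide with $\mathcal{M}^{Jac,n}_{a,b,\beta}$. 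Hence $\mu=\mathcal{M}^{Jac,n}_{a,b,\beta}$, as required.

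The only step carrying any real content is the uniqueness of the stationary measure, handled by the positivity of the density together with ergodic decomposition; once this is in hand, the conclusion is a one-line push-forward driven entirely by (\ref{Jacobiintertwining}). Both ingredients are already assembled in the paragraph preceding the corollary, so the proof reduces to combining them.
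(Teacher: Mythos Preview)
Your proposal is correct and follows exactly the paper's approach: use the intertwining (\ref{Jacobiintertwining}) together with stationarity of $\mathcal{M}^{Jac,n+1}_{a-1,b-1,\beta}$ to see that $\mathcal{M}^{Jac,n+1}_{a-1,b-1,\beta}\Lambda^{\theta}_{n,n+1}$ is stationary for $Q^{(n)}_{a,b,\theta}(t)$, and then invoke the uniqueness of the stationary measure (positivity of the transition density plus mutual singularity of distinct ergodic measures) to identify it with $\mathcal{M}^{Jac,n}_{a,b,\beta}$. The paper's proof is the one-line version of what you wrote, with the uniqueness ingredients outsourced to the paragraph preceding the corollary.
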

\begin{proof}
From (\ref{Jacobiintertwining}) we obtain that $\mathcal{M}^{Jac,n+1}_{a-1,b-1,\beta}\Lambda^{\theta}_{n,n+1}$ is the unique stationary measure of $Q^{(n)}_{a,b,\theta}(t)$
\end{proof}
Before closing this introduction we remark, that in order to establish Theorem \ref{MainTheorem}, we will follow the strategy given in \cite{RamananShkolnikov}, namely we rely on the explicit action of the generators and integral kernel on the class of Jack polynomials which, along with an exponential moment estimate, will allow us to apply the moment method. We note that, although the $\beta$-Laguerre and $\beta$-Jacobi diffusions look more complicated than $\beta$-Dyson's Brownian motion, the main computation, performed in Step 1 of the proof below, is actually simpler than the one in \cite{RamananShkolnikov}.

\paragraph{Acknowledgements} I would like to thank Jon Warren for several useful comments on an earlier draft of this note and also Neil O'Connell and Nizar Demni for some historical remarks. Finally, I would like to thank an anonymous referee for detailed comments and suggestions that led to an improvement of the exposition. Financial support through the MASDOC DTC grant number EP/HO23364/1 is gratefully acknowledged.

\section{Preliminaries on Jack polynomials}
We collect some facts on the Jack polynomials $J_{\lambda}(z;\theta)$ which as already mentioned will play a key role in obtaining these intertwining relations. We mainly follow \cite{RamananShkolnikov} which in turn follows \cite{BakerForrester} (note that there is a misprint in \cite{RamananShkolnikov}; there is a factor of $\frac{1}{2}$ missing from equation (2.7) therein c.f. equation (2.13d) in \cite{BakerForrester}). The $J_{\lambda}(z;\theta)$ are defined to be the (unique up to normalization) symmetric polynomial eigenfunctions in $n$ variables of the differential operator $\mathcal{D}^{(n),\theta}$,
\begin{align}
\mathcal{D}^{(n),\theta}=\sum_{i=1}^{n}z^2_i\frac{\partial}{\partial z^2_i}+2 \theta \sum_{i=1}^{n}\sum_{1\le j \le k, j \ne i}^{}\frac{z^2_i}{z_i-z_j}\frac{\partial}{\partial z_i},
\end{align}
indexed by partitions $\lambda=(\lambda_1 \ge \lambda_2\ge \cdots)$ of length $l$ with eigenvalue $eval(\lambda,n,\theta)=2B(\lambda')-2\theta B(\lambda)+2\theta(n-1)|\lambda|$ where $B(\lambda)=\sum(i-1)\lambda_i=\sum \binom{\lambda'_i}{2}$ and $\lambda'$ is the conjugate partition. With $1_n$ denoting a row vector of $n$ $1$s, we have the normalization,
\begin{align*}
J_{\lambda}(1_n;\theta)=\theta^{-|\lambda|}\prod_{i=1}^{l}\frac{\Gamma\left(\left(n+1-i\right)\theta+\lambda_i\right)}{\Gamma\left(\left(n+1-i\right)\theta\right)}.
\end{align*}
Define the following differential operators,
\begin{align}
\mathcal{B}_1^{(n)}&= \sum_{i=1}^{n}\frac{\partial}{\partial z_i},\\
\mathcal{B}_2^{(n),\theta}&=\sum_{i=1}^{n}z_i\frac{\partial}{\partial z^2_i}+2 \theta \sum_{i=1}^{n}\sum_{1\le j \le k, j \ne i}^{}\frac{z_i}{z_i-z_j}\frac{\partial}{\partial z_i},\\
\mathcal{B}_3^{(n)}&= \sum_{i=1}^{n}z_i\frac{\partial}{\partial z_i}.
\end{align}
Then the action of these operators on the $J_{\lambda}(z;\theta)$'s is given explicitly by (see \cite{BakerForrester} equations $(2.13a)$, $(2.13d)$ and $(2.13b)$ respectively),
\begin{align}
\mathcal{B}_1^{(n)}J_{\lambda}(z;\theta)&=J_{\lambda}(1_n;\theta)\sum_{i=1}^{l}\binom{\lambda}{\lambda_{(i)}}_{\theta}\frac{J_{\lambda_{(i)}}(z;\theta)}{J_{\lambda_{(i)}}(1_n;\theta)},\\
\mathcal{B}_2^{(n),\theta}J_{\lambda}(z;\theta)&=J_{\lambda}(1_n;\theta)\sum_{i=1}^{l}\binom{\lambda}{\lambda_{(i)}}_{\theta}(\lambda_i-1+(n-i)\theta)\frac{J_{\lambda_{(i)}}(z;\theta)}{J_{\lambda_{(i)}}(1_n;\theta)},\\
\mathcal{B}_3^{(n)}J_{\lambda}(z;\theta)&=|\lambda|J_{\lambda}(z;\theta),
\end{align}
where $\lambda_{(i)}$ is the sequence given by $\lambda_{(i)}=(\lambda_1,\cdots,\lambda_{i-1},\lambda_i-1,\lambda_{i+1},\cdots)$ (in case $i=l$ and $\lambda_i=1$ we drop $\lambda_l$ from $\lambda$) and the combinatorial coefficients $\binom{\lambda}{\rho}_{\theta}$ are defined by the following expansion (we set $\binom{\lambda}{\lambda_{(i)}}_{\theta}=0$ in case $\lambda_{(i)}$ is no longer a non-decreasing positive sequence),
\begin{align*}
\frac{J_{\lambda}(1_n+z;\theta)}{J_{\lambda}(1_n;\theta)}=\sum_{m=0}^{|\lambda|}\sum_{|\rho|=m}^{}\binom{\lambda}{\rho}_{\theta}\frac{J_{\rho}(z;\theta)}{J_{\rho}(1_n;\theta)},
\end{align*}
but whose exact values will not be required in what follows. Finally, we need the following about the action of $\Lambda^{\theta}_{n,n+1}$ on $J_{\lambda}(\cdot;\theta)$ (see \cite{OkounkovOlshanski} Section 6),
\begin{align}\label{kernelonjack}
\int_{W^{n,n+1}(x)}^{}\lambda^{\theta}_{n,n+1}(x,y)J_{\lambda}(y;\theta)dy=J_{\lambda}(x;\theta)c(\lambda,n,\theta) ,
\end{align}
where,
\begin{align}
c(\lambda,n,\theta)=\frac{\Gamma((n+1)\theta)}{\Gamma(\theta)}\prod_{i=1}^{n}\frac{\Gamma\left(\left(n+1-i\right)\theta+\lambda_i\right)}{\Gamma\left(\left(n+2-i\right)\theta+\lambda_i\right)}.
\end{align}

\section{Proof}
We split the proof in 4 steps, following the strategy laid out in \cite{RamananShkolnikov}.
\begin{proof}[Proof of Theorem \ref*{MainTheorem}]
First, note that we can write the operators $\mathcal{L}^{(n)}_{d,\theta}$ and $\mathcal{A}^{(n)}_{a,b,\theta}$ as follows,
\begin{align}
\mathcal{L}^{(n)}_{d,\theta}&=2\mathcal{B}_2^{(n),\theta}+\theta d \mathcal{B}_1^{(n)},\\
\mathcal{A}^{(n)}_{a,b,\theta}&=2\mathcal{B}_2^{(n),\theta}-2\mathcal{D}^{(n),\theta}+2\theta a \mathcal{B}_1^{(n)}-2\theta(a+b)\mathcal{B}_3^{(n),\theta}.
\end{align}
\paragraph{Step 1} The aim of this step is to show the intertwining relation at the level of the infinitesimal generators acting on the Jack polynomials. Namely that,
\begin{align}
\mathcal{L}^{(n+1)}_{d-2,\theta}\Lambda^{\theta}_{n,n+1}J_{\lambda}(\cdot;\theta)&=\Lambda^{\theta}_{n,n+1}\mathcal{L}^{(n)}_{d,\theta}J_{\lambda}(\cdot;\theta)\label{BESQgeneratorintertwining}, \\
\mathcal{A}^{(n+1)}_{a-1,b-1,\theta}\Lambda^{\theta}_{n,n+1}J_{\lambda}(\cdot;\theta)&=\Lambda^{\theta}_{n,n+1}\mathcal{A}^{(n)}_{a,b,\theta}J_{\lambda}(\cdot;\theta)\label{Jacgeneratorintertwining}.
\end{align}
We will show relation (\ref*{Jacgeneratorintertwining}) for the Jacobi case and at the end of Step 1 indicate how to obtain (\ref*{BESQgeneratorintertwining}).

\textbf{(LHS)}=
\begin{align*}
&\mathcal{A}^{(n+1)}_{a-1,b-1,\theta}J_{\lambda}(x;\theta)c(\lambda,n,\theta)=c(\lambda,n,\theta)\left(2\mathcal{B}_2^{(n+1),\theta}-2\mathcal{D}^{(n+1),\theta}+2\theta (a-1) \mathcal{B}_1^{(n+1)}-2\theta(a+b-2)\mathcal{B}_3^{(n+1),\theta}\right)J_{\lambda}(x;\theta)\\
&=c(\lambda,n,\theta)\bigg[2J_{\lambda}(1_{n+1};\theta)\sum_{i=1}^{l}\binom{\lambda}{\lambda_{(i)}}_{\theta}(\lambda_i-1+(n+1-i)\theta)\frac{J_{\lambda_{(i)}}(x;\theta)}{J_{\lambda_{(i)}}(1_{n+1};\theta)}-2eval(\lambda,n+1,\theta)J_{\lambda}(x;\theta)\\ &+2\theta(a-1)J_{\lambda}(1_{n+1};\theta)\sum_{i=1}^{l}\binom{\lambda}{\lambda_{(i)}}_{\theta}\frac{J_{\lambda_{(i)}}(x;\theta)}{J_{\lambda_{(i)}}(1_{n+1};\theta)}-2\theta(a+b-2)|\lambda|J_{\lambda}(x;\theta)\bigg].
\end{align*} 

\textbf{(RHS)}: We start by computing $\mathcal{A}^{(n)}_{a,b,\theta}J_{\lambda}(y;\theta)$.
\begin{align}
&\mathcal{A}^{(n)}_{a,b,\theta}J_{\lambda}(y;\theta)=\left(2\mathcal{B}_2^{(n),\theta}-2\mathcal{D}^{(n),\theta}+2\theta a \mathcal{B}_1^{(n)}-2\theta(a+b)\mathcal{B}_3^{(n),\theta}\right)J_{\lambda}(y;\theta) \nonumber\\
&=\bigg[2J_{\lambda}(1_{n};\theta)\sum_{i=1}^{l}\binom{\lambda}{\lambda_{(i)}}_{\theta}(\lambda_i-1+(n-i)\theta)\frac{J_{\lambda_{(i)}}(y;\theta)}{J_{\lambda_{(i)}}(1_{n+1};\theta)}-2eval(\lambda,n,\theta)J_{\lambda}(y;\theta)\label{linearcombination}\\ &+2\theta a J_{\lambda}(1_{n};\theta)\sum_{i=1}^{l}\binom{\lambda}{\lambda_{(i)}}_{\theta}\frac{J_{\lambda_{(i)}}(y;\theta)}{J_{\lambda_{(i)}}(1_{n};\theta)}-2\theta(a+b)|\lambda|J_{\lambda}(y;\theta)\bigg] \nonumber.
\end{align} 
Now, apply $\Lambda^{\theta}_{n,n+1}$ to obtain that,
\begin{align*}
\textbf{(RHS)}&=2J_{\lambda}(1_{n};\theta)\sum_{i=1}^{l}\binom{\lambda}{\lambda_{(i)}}_{\theta}(\lambda_i-1+(n-i)\theta)c(\lambda_{(i)},n,\theta)\frac{J_{\lambda_{(i)}}(x;\theta)}{J_{\lambda_{(i)}}(1_{n+1};\theta)}-2c(\lambda,n,\theta)eval(\lambda,n,\theta)J_{\lambda}(x;\theta)\\ &
+2\theta a J_{\lambda}(1_{n};\theta)\sum_{i=1}^{l}\binom{\lambda}{\lambda_{(i)}}_{\theta}c(\lambda_{(i)},n,\theta)\frac{J_{\lambda_{(i)}}(x;\theta)}{J_{\lambda_{(i)}}(1_{n};\theta)}-2\theta(a+b)|\lambda|c(\lambda,n,\theta)J_{\lambda}(x;\theta).
\end{align*}

Now, in order to check \textbf{(LHS)}=\textbf{(RHS)} we check that the coefficients of $J_{\lambda}$ and $J_{\lambda_{(i)}}$ $\forall i$ coincide on both sides.\\

$\bullet$ First, the coefficients of $J_{\lambda}(x;\theta)$:

\textbf{(LHS)}: $-2c(\lambda,n,\theta)eval(\lambda,n+1,\theta)-c(\lambda,n,\theta)|\lambda|2 \theta (a+b-2)$.

\textbf{(RHS)}: $-2c(\lambda,n,\theta)eval(\lambda,n,\theta)-c(\lambda,n,\theta)|\lambda|2 \theta (a+b)$.

These are equal iff:
\begin{align*}
\frac{-2eval(\lambda,n,\theta)+2eval(\lambda,n+1,\theta)}{4\theta |\lambda|}=1,
\end{align*}
which is easily checked from the explicit expression of $eval(n,\lambda,\theta)$.\\

$\bullet$ Now, for the coefficients of $J_{\lambda_{(i)}}(x;\theta)$:

\textbf{(LHS)}:
\begin{align*}
& 2J_{\lambda}(1_{n+1};\theta)\binom{\lambda}{\lambda_{(i)}}_{\theta}(\lambda_i-1+(n+1-i)\theta)\frac{c(\lambda,n,\theta)}{J_{\lambda_{(i)}}(1_{n+1};\theta)}+2\theta(a-1)J_{\lambda}(1_{n+1};\theta)\binom{\lambda}{\lambda_{(i)}}_{\theta}\frac{c(\lambda,n,\theta)}{J_{\lambda_{(i)}}(1_{n+1};\theta)}.
\end{align*} 

\textbf{(RHS)}: 
\begin{align*}
& 2J_{\lambda}(1_{n};\theta)\binom{\lambda}{\lambda_{(i)}}_{\theta}(\lambda_i-1+(n-i)\theta)\frac{c(\lambda_{(i)},n,\theta)}{J_{\lambda_{(i)}}(1_{n};\theta)}+2\theta aJ_{\lambda}(1_{n};\theta)\binom{\lambda}{\lambda_{(i)}}_{\theta}\frac{c(\lambda_{(i)},n,\theta)}{J_{\lambda_{(i)}}(1_{n};\theta)}.
\end{align*} 
These are equal iff:
\begin{align*}
a-1=\frac{J_{\lambda}(1_{n};\theta)c(\lambda_{(i)},n,\theta)J_{\lambda_{(i)}}(1_{n+1};\theta)}{J_{\lambda_{(i)}}(1_{n};\theta)c(\lambda,n,\theta)J_{\lambda}(1_{n+1};\theta)}a+\frac{1}{\theta}\frac{J_{\lambda}(1_{n};\theta)c(\lambda_{(i)},n,\theta)J_{\lambda_{(i)}}(1_{n+1};\theta)}{J_{\lambda_{(i)}}(1_{n};\theta)c(\lambda,n,\theta)J_{\lambda}(1_{n+1};\theta)}(\lambda_i-1+(n-i)\theta)\\
-\frac{1}{\theta}(\lambda_i-1+(n+1-i)\theta).
\end{align*}
We first claim that,
\begin{align*}
\frac{J_{\lambda}(1_{n};\theta)c(\lambda_{(i)},n,\theta)J_{\lambda_{(i)}}(1_{n+1};\theta)}{J_{\lambda_{(i)}}(1_{n};\theta)c(\lambda,n,\theta)J_{\lambda}(1_{n+1};\theta)}=1.
\end{align*}
This immediately follows from,
\begin{align*}
\frac{J_{\lambda}(1_{n};\theta)}{J_{\lambda_{(i)}}(1_{n};\theta)}&=\theta^{-1}\frac{\Gamma\left(\left(n+1-i\right)\theta+\lambda_i\right)}{\Gamma\left(\left(n+1-i\right)\theta+\lambda_i-1\right)},\\
\frac{J_{\lambda_{(i)}}(1_{n+1};\theta)}{J_{\lambda}(1_{n+1};\theta)}&=\theta\frac{\Gamma\left(\left(n+2-i\right)\theta+\lambda_i-1\right)}{\Gamma\left(\left(n+2-i\right)\theta+\lambda_i\right)},\\
\frac{c(\lambda_{(i)},n,\theta)}{c(\lambda,n,\theta)}&=\frac{\Gamma\left(\left(n+1-i\right)\theta+\lambda_i-1\right)\Gamma\left(\left(n+2-i\right)\theta+\lambda_i\right)}{\Gamma\left(\left(n+1-i\right)\theta+\lambda_i\right)\Gamma\left(\left(n+2-i\right)\theta+\lambda_i-1\right)}.
\end{align*}
Hence, we need to check that the following is true,
\begin{align*}
a-1=a+\frac{1}{\theta}(\lambda_i-1+(n-i)\theta)-\frac{1}{\theta}(\lambda_i-1+(n-i+1)\theta),
\end{align*}
which is obvious.

Now, in order to obtain (\ref*{BESQgeneratorintertwining}) we only need to consider coefficients in $J_{\lambda_{(i)}}$'s (since the operators $\mathcal{D}^{(n),\theta}$ and $\mathcal{B}_3^{(n)}$ that produce $J_{\lambda}$'s are missing) and replace $a$ by $\frac{d}{2}$.

To prove the analogous result for $\beta$-Dyson Brownian motions, one needs to observe, as done in \cite{RamananShkolnikov}, that the generator of $n$ particle $\beta$-Dyson Brownian motion $L^{(n)}_{\theta}$ can be written as a commutator, namely  $L^{(n)}_{\theta}=[\mathcal{B}_1^{(n)},\mathcal{B}_2^{(n),\theta}]=\mathcal{B}_1^{(n)}\mathcal{B}_2^{(n),\theta}-\mathcal{B}_2^{(n),\theta}\mathcal{B}_1^{(n)}$.

\paragraph{Step 2} We obtain an exponential moment estimate, namely regarding $\mathbb{E}_{x}\left[e^{\epsilon \|X^{(n)}(t)\|}\right]$. This is obviously finite by compactness of $[0,1]^n$ in the Jacobi case. In the Laguerre case, we proceed as follows. Writing $X^{(n)}$ for the solution to (\ref*{BESQsde}), letting $\|\cdot\|$ denote the $l_1$ norm and recalling that all entries of $X^{(n)}$ are non-negative we obtain,
\begin{align*}
d\|X^{(n)}(t)\|=\sum_{i=1}^{n}2\sqrt{dX_i^{(n)}(t)}dB_i^{(n)}(t)+\beta\left(\frac{d}{2}n+\sum_{i=1}^{n}\sum_{1 \le j \le n, j\ne i}^{}\frac{2X_i^{(n)}(t)}{X_i^{(n)}(t)-X_j^{(n)}(t)}\right)dt.
\end{align*}
Note that,
\begin{align*}
\sum_{i=1}^{n}\sum_{1 \le j \le n, j\ne i}^{}\frac{2X_i^{(n)}(t)}{X_i^{(n)}(t)-X_j^{(n)}(t)}=2\binom{n}{2},
\end{align*}
and that by Levy's characterization the local martingale $(M(t),t\ge 0)$ defined by, 
\begin{align*}
dM(t)=\frac{1}{\sqrt{\|X^{(n)}(t)\|}}\sum_{i=1}^{n}\sqrt{X^{(n)}_i(t)}dB^{(n)}_i(t),
\end{align*}
is equal to a standard Brownian motion $(W(t),t\ge 0)$ and so we obtain,
\begin{align*}
d\|X^{(n)}(t)\|=2\sqrt{\|X^{(n)}(t)\|}dW(t)+\beta\left(\frac{d}{2}n+2\binom{n}{2}\right)dt.
\end{align*}
Thus, $\|X^{(n)}(t)\|$ is a squared Bessel process of dimension $dim_{\beta,n,d}=\beta\left(\frac{d}{2}n+2\binom{n}{2}\right)$. Hence, from standard estimates (see \cite{RevuzYor} Chapter IX.1 or Proposition 2.1 of \cite{LDPBessel}; in case that $dim_{\beta,n,d}$ is an integer the result is an immediate consequence of Fernique's theorem (\cite{Fernique}) since $\|X^{(n)}(t)\|$ is the square of a Gaussian process) it follows that, for $\epsilon>0$ small enough, $\mathbb{E}_{x}\left[e^{\epsilon \|X^{(n)}(t)\|}\right]<\infty$.

\paragraph{Step 3} We now lift the intertwining relation to the semigroups acting on the Jack polynomials, namely,
\begin{align*}
P^{(n+1)}_{d-2,\theta}(t)\Lambda^{\theta}_{n,n+1}J_{\lambda}(\cdot;\theta)&=\Lambda^{\theta}_{n,n+1}P^{(n)}_{d,\theta}(t)J_{\lambda}(\cdot;\theta),\\
Q^{(n+1)}_{a-1,b-1,\theta}(t)\Lambda^{\theta}_{n,n+1}J_{\lambda}(\cdot;\theta)&=\Lambda^{\theta}_{n,n+1}Q^{(n)}_{a,b,\theta}(t)J_{\lambda}(\cdot;\theta).
\end{align*}
The proof follows almost word for word the elegant argument given in \cite{RamananShkolnikov}. We reproduce it here, elaborating a bit on some parts, for the convenience of the reader, moreover only considering the Laguerre case for concreteness. We begin by applying Ito's formula to $J_{\lambda}(X^{(n)}(t);\theta)$ and taking expectations (note that the stochastic integral term is a true martingale since its expected quadratic variation is finite which follows by the exponential estimate of Step 2) we obtain,
\begin{align}\label{integral}
P^{(n)}_{d,\theta}(t)J_{\lambda}(\cdot;\theta)=J_{\lambda}(\cdot;\theta)+\int_{0}^{t}P^{(n)}_{d,\theta}(s)\mathcal{L}^{(n)}_{d,\theta}J_{\lambda}(\cdot;\theta)ds.
\end{align}
Now, note that by (\ref{linearcombination}), $\mathcal{L}^{(n)}_{d,\theta}J_{\lambda}(\cdot;\theta)$ is given by a linear combination of Jack polynomials $J_{\kappa}(\cdot;\theta)$ for some partitions $\kappa$ with $\kappa_i\le \lambda_i$ $\forall i \le l$ and we will write $\kappa\le \lambda$ if this holds. We will denote the action of $\mathcal{L}^{(n)}_{d,\theta}$ on this \textit{finite} dimensional vector space, spanned by the Jack polynomials indexed by partitions $\kappa$ with $\kappa \le \lambda$, by the matrix $M_2$. 

Moreover, each $J_{\kappa}(\cdot;\theta)$ for $\kappa \le \lambda$ obeys (\ref{integral}) and thus we obtain the following system of integral equations, with $f_{\kappa}(t)=P^{(n)}_{d,\theta}(t)J_{\kappa}(\cdot;\theta)$,
\begin{align*}
f_{\kappa}(t)=f_{\kappa}(0)+\sum_{\nu \le \lambda}^{}M_2(\kappa,\nu)\int_{0}^{t}f_{\nu}(s)ds,
\end{align*}
whose unique solution is given by the matrix exponential,
\begin{align}\label{matrixexponentialsolution}
f_{\kappa}(t)=\sum_{\nu \le \lambda}^{}e^{tM_2}(\kappa,\nu)f_{\nu}(0).
\end{align}
Now, observe that by (\ref{kernelonjack}) the Markov kernel $\Lambda^{\theta}_{n,n+1}$ also acts on the aforementioned finite dimensional vector space of Jack polynomials as a matrix, which we denote by $M_1$. We will also denote by a matrix $M_3$ the action of $\mathcal{L}^{(n+1)}_{d-2,\theta}$ and note that the intertwining relation (\ref{BESQgeneratorintertwining}) can be written in terms of matrices as follows: $M_3M_1=M_1M_2$. Thus, making use of the following elementary fact about finite dimensional square matrices,
\begin{align*}
M_3M_1=M_1M_2 \implies e^{tM_3}M_1=M_1e^{tM_2} \  \textnormal{for} \ t \ge 0,
\end{align*}
and display (\ref{matrixexponentialsolution}), along with its analogue with $M_2$ replaced by $M_3$, we get that,
\begin{align*}
P^{(n+1)}_{d-2,\theta}(t)\Lambda^{\theta}_{n,n+1}J_{\lambda}(\cdot;\theta)&=\Lambda^{\theta}_{n,n+1}P^{(n)}_{d,\theta}(t)J_{\lambda}(\cdot;\theta).
\end{align*}
\paragraph{Step 4}
We again follow \cite{RamananShkolnikov}. Recall, (see \cite{RamananShkolnikov} and the references therein)  that we can write any \textit{symmetric} polynomial $p$ in $n$ variables as a finite linear combination of Jack polynomials in $n$ variables. Hence, for any such $p$,
\begin{align}
P^{(n+1)}_{d-2,\theta}(t)\Lambda^{\theta}_{n,n+1}p(\cdot)&=\Lambda^{\theta}_{n,n+1}P^{(n)}_{d,\theta}(t)p(\cdot)\label{symmpolyintertwining1},\\
Q^{(n+1)}_{a-1,b-1,\theta}(t)\Lambda^{\theta}_{n,n+1}p(\cdot)&=\Lambda^{\theta}_{n,n+1}Q^{(n)}_{a,b,\theta}(t)p(\cdot)\label{symmpolyintertwining2}.
\end{align}
Now, any probability measure $\mu$ on $W^n(I)$ will give rise to a symmetrized probability measure $\mu^{symm}$ on $I^n$ as follows,
\begin{align*}
\mu^{symm}(dz_1.\cdots,dz_n)=\frac{1}{n!}\mu(dz_{(1)}.\cdots,dz_{(n)}),
\end{align*}
where $z_{(1)}\le z_{(2)}\le \cdots \le z_{(n)}$ are the order statistics of $(z_1,z_2,\cdots,z_n)$. Moreover, for every (not necessarily symmetric) polynomial $q$ in  $n$ variables, with $S_n$ denoting the symmetric group on $n$ symbols, we have,
\begin{align*}
\int_{I^n}^{}q(z)d\mu^{symm}(z)=\int_{I^n}^{}\frac{1}{n!}\sum_{\sigma \in S_n}^{}q(z_{\sigma(1)},\cdots,z_{\sigma(n)})d\mu^{symm}(z)=\int_{W^{n}(I)}^{}\frac{1}{n!}\sum_{\sigma \in S_n}^{}q(z_{\sigma(1)},\cdots,z_{\sigma(n)})d\mu(z).
\end{align*}
Note that now $p(z)=\frac{1}{n!}\sum_{\sigma \in S_n}^{}q(z_{\sigma(1)},\cdots,z_{\sigma(n)})$ is a symmetric polynomial (in $n$ variables). Thus, from (\ref{symmpolyintertwining1}) and (\ref{symmpolyintertwining2}) all moments of the symmetrized versions of both sides of (\ref*{BESQintertwining}) and (\ref{Jacobiintertwining}) coincide. Hence, by Theorem 1.3 of \cite{DeJeu} (and the discussion following it) along with the fact that $(\Lambda^{\theta}_{n,n+1}f)(z)\le e^{\epsilon \|z\|_1}$ where $f(y)=e^{\epsilon \|y\|_1}$ (since all coordinates are positive) and our exponential moment estimate from Step 2 we obtain that the symmetrized versions of both sides of (\ref*{BESQintertwining}) and (\ref{Jacobiintertwining}) coincide; where we view for each $x\in W^{n+1}$ and $t\ge 0$ $P^{(n+1)}_{d-2,\theta}(t)\Lambda^{\theta}_{n,n+1}$ and $\Lambda^{\theta}_{n,n+1}P^{(n)}_{d,\theta}(t)$ as probability measures on $W^n$. In fact, by the discussion after Theorem 1.3 of \cite{DeJeu}, since we work in $[0,\infty)^n$ and not the full space $\mathbb{R}^n$, we need not require that the symmetrized versions of these measures have exponential moments but that they only need to integrate $e^{\epsilon \sqrt{\|z\|}}$. The theorem is now proven.
\end{proof}

\bigskip
\noindent
{\sc Mathematics Institute, University of Warwick, Coventry CV4 7AL, U.K.}\newline
\href{mailto:T.Assiotis@warwick.ac.uk}{\small T.Assiotis@warwick.ac.uk}

\end{document}